\numberwithin{equation}{section}
\theoremstyle{definition}
\newtheorem{thm}{Theorem}[section]
\newtheorem{definition}[thm]{Definition}
\newtheorem{proposition}[thm]{Proposition}
\newtheorem{lemma}[thm]{Lemma}
\newtheorem{corollary}[thm]{Corollary}
\newtheorem{remark}[thm]{Remark}
\author{Haiping Fu}
\address{Department of Mathematics, Nanchang University, Nanchang 330031, People’s Republic of China}
\email{mathfu@126.com}
\thanks{Supported in part by National Natural Science Foundations of China \#12461008 and 12271069,  Jiangxi Province
Natural Science Foundation of China \#20202ACB201001, Jiangxi Province Graduate Student Innovation Special Fund Project \#YC2025-B037.}
\author{Yao Lu}
\address{Department of Mathematics, Nanchang University, Nanchang 330031, People’s Republic of China}
\email{luyao@email.ncu.edu.cn}
\author{Zhilin Dai}
\address{School of Science, East China University of Technology, Fuzhou, 344000, Jiangxi, China}
\email{202361014@ecut.edu.cn}
\keywords{Einstein manifolds, Bochner technique, Harmonic curvature}
\subjclass[2020]{53C20, 53C24, 53C25.}
\begin{document}
		\newcommand{\Ext}{\bigwedge\nolimits}
	\newcommand{\Div}{\operatorname{div}}
	\newcommand{\Hol} {\operatorname{Hol}}
	\newcommand{\diam} {\operatorname{diam}}
	\newcommand{\Scal} {\operatorname{Scal}}
	\newcommand{\scal} {\operatorname{scal}}
	\newcommand{\Ric} {\operatorname{Ric}}
	\newcommand{\Hess} {\operatorname{Hess}}
	\newcommand{\grad} {\operatorname{grad}}
	\newcommand{\Rm} {\operatorname{Rm}}
	\newcommand{ \Rmzero } {\mathring{\Rm}}
	\newcommand{\Rc} {\operatorname{Rc}}
	\newcommand{\Curv} {S_{B}^{2}\left( \mathfrak{so}(n) \right) }
	\newcommand{ \tr } {\operatorname{tr}}
	\newcommand{ \Riczero } {\mathring{\Ric}}
	\newcommand{ \Ad } {\operatorname{Ad}}
	\newcommand{ \dist } {\operatorname{dist}}
	\newcommand{ \rank } {\operatorname{rank}}
	\newcommand{\Vol}{\operatorname{Vol}}
	\newcommand{\dVol}{\operatorname{dVol}}
	\newcommand{ \zitieren }[1]{ \hspace{-3mm} \cite{#1}}
	\newcommand{ \pr }{\operatorname{pr}}
	\newcommand{\diag}{\operatorname{diag}}
	\newcommand{\Lagr}{\mathcal{L}}
	\newcommand{\av}{\operatorname{av}}
	\newcommand{ \floor }[1]{ \lfloor #1 \rfloor }
	\newcommand{ \ceil }[1]{ \lceil #1 \rceil }
	\newcommand{\Sym} {\operatorname{Sym}}
	\newcommand{\bcirc}{ \ \bar{\circ} \ }
	\newcommand{\sign}[1]{\operatorname{sign}(#1)}
	\newcommand{\cone}{\operatorname{cone}}
	\newcommand{\pbd}{\varphi_{bar}^{\delta}}
	\newcommand{\End}{\operatorname{End}}
	
	\renewcommand{\labelenumi}{(\alph{enumi})}
	\newtheorem{maintheorem}{Theorem}[]
	\renewcommand*{\themaintheorem}{\Alph{maintheorem}}
	\newtheorem*{remark*}{Remark}

	\vspace*{-1cm}

	\title{Manifolds with harmonic curvature and curvature operator of the second kind}
	\begin{abstract}
	 We prove that complete Riemannian manifolds of dimension $n\ge3$ with harmonic curvature and $\frac{n(n+2)}{2(n+1)}$-nonnegative curvature operator of the second kind must be Einstein. In particular, We show that complete Einstein manifolds of dimension $n\ge4$ with  $\frac{3n(n-1)^2(n+2)}{2(5n^3+3n^2-30n+16)}$-nonnegative curvature operator of the second kind must be of constant curvature, which generalizes the work of Dai-Fu \cite{DF}.
	\end{abstract}
	\maketitle
	
	\pagestyle{fancy}
	\fancyhead{} 
	\fancyhead[CE]{}
	\fancyhead[CO]{}
	\fancyfoot{}
	\fancyfoot[CE,CO]{\thepage}
	\setlength{\footskip}{13.0pt}
	\section{introduction}
	It is an important topic in geometry to understand how geometric assumptions restrict the topology of the underlying Riemannian manifold. In this direction, Tachibana \cite{Tac} showed that any compact Riemannian manifold of dimension $n\ge3$ with harmonic curvature and positive curvature operator  is isometric to a quotient of  the standard sphere. If the curvature operator is only nonnegative, then the manifold is locally symmetric. Later, Tran \cite{tran} proved that  a compact Riemannian manifold $M$ of dimension $n\ge 4$ with harmonic Weyl curvature and  positive curvature operator, then $M$ is locally conformally flat. In \cite{PW1}  Petersen and Wink showed that a compact manifold with harmonic Weyl tensor and $[\frac{n-1}{2}]$-nonnegative curvature operator is either globally conformal to a quotient of  the standard sphere or locally symmetric, and that the first possibility always occurs if the curvature operator is $[\frac{n-1}{2}]$-positive. In \cite{PW} they proved a similar Tachibana-type theorem under the stronger condition that the manifold be Einstein.
More recently, Colombo, Mariani, and Rigoli \cite{CMR} established that a compact Riemannian manifold of dimension $n\ge3$ with harmonic curvature and  $[\frac{n-1}{2}]$-positive curvature operator  must have constant sectional curvature, thereby generalizing Tachibana's result. This result was improved by Bettiol and Goodman \cite{BG} to $\frac{n-1}{2}$-positive curvature operator for $n\geq5$ and $\frac{n}{2}$-positive curvature operator for $n=3,4$.
	
	For curvature operator of the second kind, Nishikawa \cite{Nis} conjectured that  a closed simply connected Riemannian manifold for which the curvature operator of the second kind is nonnegative and positive is diffeomorphic to a Riemannian locally symmetric space and a spherical space form, respectively. This conjecture has been proved and improved by Cao–Gursky–Tran \cite{CGT} and Li \cite{Li4}, who relaxed the assumption to $3$-positive ($3$-nonnegative) curvature operator of the second kind. In 1993, Kashiwada \cite{ka}  showed that Riemannian manifolds with harmonic curvature for which the curvature operator of the second kind is nonnegative (respectively, positive) are locally symmetric spaces (respectively, constant curvature spaces). Nienhaus, Petersen and Wink \cite{NP,NPWW} used the Bochner formula to show that $n$-dimensional compact Einstein manifolds with $k(<\frac{3n(n+2)}{2(n+4)})$-nonnegative curvature operators of the second kind are either rational homology spheres or flat. Dai-Fu \cite{DF} proved that  any compact Einstein manifold must have constant curvature if its curvature operator of the second kind is $k$-nonnegative, where $k=1$ for $4\leq n\leq7$, $k=2$ for $8\leq n\leq10$ and  $k=[\frac{n+2}{4}]$ for $n\geq11$.  Later, Dai-Fu-Yang \cite{DFY} proved a compact manifold with harmonic Weyl tensor and nonnegative curvature operator of the second kind is globally conformally equivalent to a space of positive constant curvature or is isometric to a flat manifold. 
	 
	Let $\lambda_{i}$ be the eigenvalues of the curvature operator, arranged in non-decreasing order. If $${\lambda }_1+{\lambda }_2+\cdots {\lambda }_{[k]}+(k-[k])\lambda _{[k]+1}> 0(\ge 0),$$ 
	we say that the curvature operator is $k$-positive(resp. nonnegative).
	In this paper,  we investigate Riemannian manifolds of dimension $n\ge3$ with harmonic curvature and $\frac{n(n+2)}{2(n+1)}$-nonnegative curvature operator of the second kind, and obtain the following Theorem~\ref{A}.
	\begin{thm}\label{A}
	Let $(M, g)$ be an $n(\ge 3)$-dimensional complete Riemannian manifold with harmonic curvature tensor. If the curvature operator of the second kind $\mathring{R}$ is $\frac{n(n+2)}{2(n+1)}$-nonnegative, then $M$ is an Einstein manifold.
	\end{thm}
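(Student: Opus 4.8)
The plan is to exploit the Weitzenböck/Bochner machinery for the Cotton tensor (or equivalently for $\nabla\Ric$) in the harmonic curvature setting, in the same spirit as Kashiwada's argument, but pushing the curvature-operator-of-the-second-kind hypothesis down to the sharp constant $\frac{n(n+2)}{2(n+1)}$. Recall that harmonic curvature ($\delta R=0$, equivalently $d\Ric=0$ via the second Bianchi identity) means the Ricci tensor is a Codazzi tensor, and the scalar curvature $\scal$ is constant. The traceless Ricci tensor $\Riczero$ then satisfies an elliptic equation: contracting the second Bianchi identity gives a Bochner-type formula
\begin{equation}
\tfrac12\Delta |\Riczero|^2 = |\nabla\Riczero|^2 + \langle \mathcal{R}(\Riczero),\Riczero\rangle,
\end{equation}
where $\mathcal{R}$ denotes the natural curvature action on symmetric $2$-tensors (the term combining $R_{ikjl}\Riczero^{kl}$ and the Ricci commutator terms). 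Integrating over the closed manifold $M$ kills the left-hand side, so it suffices to show that the symmetric bilinear form $\langle\mathcal{R}(h),h\rangle$, restricted to trace-free $h$, is nonnegative under the hypothesis, with rigidity forcing $\nabla\Riczero=0$ and ultimately $\Riczero=0$.

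The heart of the matter is therefore a pointwise algebraic estimate: bound the curvature term $\langle\mathcal{R}(h),h\rangle$ from below by (a positive multiple of) an eigenvalue-of-$\Riczero$ expression plus a controlled error, using only that $\mathring{R}$ — the curvature operator of the second kind, acting on $S^2_0(\R^n)$ — is $\frac{n(n+2)}{2(n+1)}$-nonnegative. The key steps: (i) diagonalize $\Riczero$ at a point and write $h=\Riczero$ with eigenvalues $\mu_1,\dots,\mu_n$, $\sum\mu_i=0$; (ii) re-express $\langle\mathcal{R}(\Riczero),\Riczero\rangle$ in terms of sectional curvatures $R_{ijij}$ and the $\mu_i$; (iii) feed in the definition of $k$-nonnegativity of $\mathring{R}$ — i.e. that any sum of $[k]$ eigenvalues of $\mathring{R}$ (plus the fractional part times the next) is $\ge 0$ — applied to judiciously chosen test tensors in $S^2_0$ built from the eigenbasis of $\Riczero$ (typically $e_i\odot e_j$ for $i\ne j$ and diagonal combinations $\sum c_i\, e_i\otimes e_i$ with $\sum c_i=0$), and combine these linear inequalities; (iv) optimize the choice of test tensors / weights so the resulting bound matches exactly at the threshold $k=\frac{n(n+2)}{2(n+1)}$. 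This is the standard but delicate "summation trick" of Petersen–Wink and Nienhaus–Petersen–Wink adapted to $\mathring R$, and getting the combinatorics to yield precisely the constant $\frac{n(n+2)}{2(n+1)}$ — rather than a weaker one — is where the real work lies.

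The main obstacle I anticipate is exactly this last calibration step: showing that the $k$-nonnegativity hypothesis with $k=\frac{n(n+2)}{2(n+1)}$ is strong enough to make the Bochner integrand nonnegative. The curvature operator of the second kind does not directly control $R_{ikjl}$ the way the operator of the first kind does, so one must pass through the relation between $\mathring R$ and the full curvature tensor — using that $S^2(\R^n)=S^2_0(\R^n)\oplus\R g$ and that $\mathring R$'s eigenvalues determine the sectional curvatures up to the first-kind/second-kind dictionary. One then selects a family of $S^2_0$-vectors whose $\mathring R$-quadratic forms sum (with multiplicities) to reproduce $\langle\mathcal{R}(\Riczero),\Riczero\rangle$ plus a manifestly nonnegative remainder; the number of such vectors one is forced to use, weighted appropriately, is what pins down $k$. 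Once nonnegativity is established, the equality case gives $\nabla\Riczero\equiv0$, hence $\Riczero$ is parallel; a standard argument (de Rham splitting, or directly that a parallel Codazzi tensor with constant scalar curvature on a connected closed manifold whose curvature operator of the second kind is suitably positive must be a multiple of $g$) then forces $\Riczero=0$, i.e. $M$ is Einstein.
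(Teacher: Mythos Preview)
Your overall strategy---Bochner formula for the traceless Ricci tensor $E=\Riczero$, a pointwise lower bound on the curvature term under the $k$-nonnegativity hypothesis, and a rigidity argument via parallel Ricci plus irreducibility---is exactly the paper's approach. The difference lies in how the calibration step is executed, and here the paper's route is considerably cleaner than what you propose.

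Rather than passing through sectional curvatures and hand-picked test tensors, the paper exploits the natural splitting coming directly out of the Ricci identity:
\[
\langle \Delta E, E\rangle \;=\; \langle \mathring{R}(E),E\rangle \;+\; \Ric_{js}E_{ij}E_{is}.
\]
The first summand is already a quadratic form in $\mathring{R}$ on $S^2_0$, so writing $E=\sum_\alpha E_\alpha S_\alpha$ in an eigenbasis of $\mathring{R}$ gives $\langle\mathring{R}(E),E\rangle=\sum_\alpha \lambda_\alpha E_\alpha^2\ge [\mathring{R},1,1]\,|E|^2$ in the Nienhaus--Petersen--Wink weight notation. For the second summand one invokes the ready-made estimate $\Ric\ge[\mathring{R},\tfrac{n}{n+2},n-1]$ from \cite{NP}, yielding $\Ric_{js}E_{ij}E_{is}\ge [\mathring{R},\tfrac{n}{n+2},n-1]\,|E|^2$. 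Adding the two weight expressions gives $[\mathring{R},\tfrac{2(n+1)}{n+2},n]\,|E|^2$, and the ratio $S/\Omega = n\cdot\tfrac{n+2}{2(n+1)}=\tfrac{n(n+2)}{2(n+1)}$ is precisely the claimed threshold---no optimization over test-tensor families is needed. So the obstacle you flag dissolves once you recognize that the Bochner curvature term \emph{already} decomposes into a $\mathring{R}$-piece and a Ricci-piece, and that the Ricci lower bound in terms of $\mathring{R}$-eigenvalues is available off the shelf.

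For the endgame, the paper does essentially what you suggest: once $\nabla E=0$, pass to the universal cover, use nonnegative Ricci plus Cheeger--Gromoll to split off any Euclidean factor, and then invoke Li's local-irreducibility theorem for manifolds with $\tfrac{n(n+2)}{2(n+1)}$-nonnegative $\mathring{R}$ (rather than a generic de~Rham argument) to rule out nontrivial products, forcing $E\equiv 0$.
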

	For Einstein manifolds, we establish the following theorem, which generalizes the result of \cite{DF}.
	
	\begin{thm}\label{B}
	Let $(M, g)$ be an $n(\ge 4)$-dimensional complete Einstein manifold. If the curvature operator of the second kind $\mathring{R}$ is $\frac{3n(n-1)^2(n+2)}{2(5n^3+3n^2-30n+16)}$-nonnegative, then $M$ must be of constant curvature.
    \end{thm}	
	Therefore, Based on Theorems~\ref{A} and ~\ref{B}, and the rigidity results for Einstein manifolds related to the curvature operator of the second kind — as established by Dai-Fu \cite{DF} and Li \cite{Li3}, we obtain the following corollary.
	\begin{corollary}
		Let $(M, g)$ be an $n(\ge 3)$-dimensional complete Riemannian manifold with harmonic curvature tensor, and $\mathring{R}\,$ be the curvature operator of the second kind. If\\ 
		(i) $3\le n\le 8$ and $\mathring{R}\,$ is $\frac{n(n+2)}{2(n+1)}$-nonnegative;\\
		(ii) $9\le n\le 15$ and $\mathring{R}\,$ is $4\frac12$-nonnegative;\\
		(iii) $n\ge 16$, $\mathring{R}\,$ is $\frac{3n(n-1)^2(n+2)}{2(5n^3+3n^2-30n+16)}$-nonnegative,\\
		then $M$ must be of constant curvature.
	\end{corollary}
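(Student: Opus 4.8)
\emph{Plan of proof.} This statement is a packaging of Theorem~\ref{A} with the Einstein rigidity theorems of Dai--Fu \cite{DF} and Li \cite{Li3}. Theorem~\ref{A} itself is available for every $n\ge 3$, so the only real work is the bookkeeping that in each of the ranges (i)--(iii) the stated curvature hypothesis is (a) strong enough to force $M$ Einstein via Theorem~\ref{A} and (b) still weak enough to be covered by one of the quoted Einstein rigidity results; this is exactly why the thresholds are arranged via $\min\{\frac{n(n+2)}{2(n+1)},\max\{4,\frac{n+2}{4}\}\}$ and split at $n=7$ and $n=13$.

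The first step is to reduce all three cases to the hypothesis of Theorem~\ref{A}. I would use the elementary monotonicity of the condition ``$\mathring{R}$ is $k$-nonnegative'' in $k$: if $k\ge 1$, then $k$-nonnegativity of $\mathring{R}$ forces $\lambda_{[k]+1}\ge 0$ (otherwise $\lambda_1\le\cdots\le\lambda_{[k]+1}$ would all be negative and the defining sum would be negative), hence $\lambda_j\ge 0$ for all $j\ge[k]+1$, and passing from $k$ to any $k'\ge k$ only appends nonnegative terms to the defining sum; thus $k$-nonnegativity implies $k'$-nonnegativity for all $k'\ge k$. Combining this with the numerical inequalities $4\le\frac{n(n+2)}{2(n+1)}$ for $n\ge 8$ and $[\frac{n+2}{4}]\le\frac{n+2}{4}\le\frac{n(n+2)}{2(n+1)}$ for all $n\ge 1$, one sees that in each of (i), (ii), (iii) the operator $\mathring{R}$ is $\frac{n(n+2)}{2(n+1)}$-nonnegative. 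By Theorem~\ref{A}, $(M,g)$ is a compact Einstein manifold.

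The second step upgrades ``Einstein'' to ``constant curvature''. In case (iii), $n\ge 14$, the hypothesis is precisely $[\frac{n+2}{4}]$-nonnegativity of $\mathring{R}$ on a compact Einstein manifold, which is the setting of Dai--Fu's theorem \cite{DF}, yielding constant sectional curvature. In case (ii), $8\le n\le13$, one invokes Li's Einstein rigidity theorem \cite{Li3}, which rules out nonflat, non--space-form compact Einstein $n$-manifolds whose curvature operator of the second kind is $4$-nonnegative in this range. In case (i), $3\le n\le 7$: for $n=3$ an Einstein metric already has constant curvature, and for $4\le n\le 7$ one again applies \cite{Li3} to the compact Einstein manifold with $\frac{n(n+2)}{2(n+1)}$-nonnegative $\mathring{R}$. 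Together these cover all cases, so $(M,g)$ has constant sectional curvature. I do not expect a genuine analytic obstacle beyond Theorem~\ref{A}; the point needing care is only that the three thresholds in (i)--(iii) are, dimension by dimension, the weakest ones for which a matching Einstein-rigidity statement is presently available, which is what dictates the case split.
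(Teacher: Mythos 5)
Your proposal is correct and is exactly the argument the paper intends: the corollary is stated as an immediate combination of Theorem~\ref{A} with the Einstein rigidity theorems of Dai--Fu \cite{DF} and Li \cite{Li3}, and your monotonicity observation (that $k$-nonnegativity implies $k'$-nonnegativity for $k'\ge k$) together with the numerical checks $4\le\frac{n(n+2)}{2(n+1)}$ for $n\ge 8$ and $[\frac{n+2}{4}]\le\frac{n(n+2)}{2(n+1)}$ supplies precisely the routine glue the paper leaves implicit. The separate treatment of $n=3$ (Einstein already implies constant curvature there) is a sensible safeguard consistent with the quoted references.
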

\begin{remark}
Corollary 1.3 extends and enhances  Kashiwada's results \cite{ka} and Theorem 1.2 in \cite{DFY}. Corollary 1.3 can be regarded as a generalization of their results on Einstein manifolds \cite{{CGT},{DF},{Li3}}.
\end{remark}

Based on some existing results in this field and Corollary 1.3, combined with Li's conjecture that a closed $n$-dimensional Riemannian manifold with $(n+\frac{n-2}{n})$-positive curvature operator of the second kind is diffeomorphic to a spherical space form, it is natural to ask:

{\bf Question}  Are closed manifolds with harmonic curvature and $(n+\frac{n-2}{n})$-positive curvature operator of the second kind spherical space form?

\section{notation and lemmas}
	In a Riemannian manifold $(M^n,g)$, the Riemannian curvature tensor $R$  induces two symmetric linear operators $\hat{R}$ and $\mathring{R}$. Denote by $V$ be the tangent space of this Riemannian manifold, by ${\Lambda }^2V$  the space of skew symmetric $2$-tensor over $V$, and by $S_0^2(V)$  the space of trace-free symmetric $2$-tensor over $V$. The curvature operator of the first kind is defined by
\begin{align*}
	& \hat{R}:{\Lambda }^2V\to {\Lambda }^2V \\ 
	& \hat{R}(e_i\wedge e_j)=\frac{1}{2}\sum\limits_{k,l}{R_{ijkl}e_k\wedge e_l},
\end{align*}
and the curvature operator of the second kind is defined in \cite{CGT, NP} by
$$\mathring{R}={\Pr}_{s_0^2(V)}\circ \left.\bar{R} \right|_{S_0^2(V)}.$$
The operator $\mathring{R}$ can be regarded as the following curvature operator $\bar{R}$ and its image restricts on $S_0^2(V)$. The operator $\bar{R}$ act on $S^2(V)$ induced from Riemannian curvature tensor $R$ is defined by
$$\bar{R}(e_i\odot e_j)=\sum\limits_{k,l}{R_{kijl}e_k\odot e_l}.$$
Here $e_i\odot e_j=e_i\otimes e_j+e_j\otimes e_i$. So $\frac{1}{\sqrt{2}}\{e_i\odot e_j\}_{1\le i<j\le n},\frac{1}{2}\{e_i\odot e_i\}_{i=1,\dots,n}$ is an orthonormal basis for $S^2(V)$.
\begin{definition}
	Let $T^{(0,k)}(V)$ denote the space of $(0,k)$-tensor space on $V$. For $S\in S^2(V)$ and $T\in T^{(0,k)}(V),$ we define
	\begin{align*}
		& S:T^{(0,k)}(V)\to T^{(0,k)}(V) \\ 
		& (ST)(X_1,\cdots ,X_k)=\sum\limits_{i=1}^{k}{T(X_1,\cdots ,SX_i,\cdots ,X_k)} 
	\end{align*}
	and define $T^{S^2}\in T^{(0,k)}(V)\otimes S^2(V)$ by
	$$\left\langle T^{S^2}(X_1,\cdots ,X_k),S \right\rangle =(ST)(X_1,\cdots ,X_k).$$
\end{definition} 
Hence, if $\left\{ S^\alpha \right\}$ is an orthonormal basis for $S_0^2(V)$, then
\[T^{S_0^2}=\sum\limits_{\alpha=1}^{N}{S^\alpha T\otimes S^\alpha}.\]

To analyze the individual terms for Bochner formula, we apply the powerful weighted-sum calculus developed in \cite{NP}, which allows us to estimate finite weighted sums with nonnegative weights.
\begin{definition}(\cite[Definition 3.1]{NP})
	Let $\{\omega_{i}\}_{i=1}^{N}$ be the nonnegative weights of any finite weighted sums. Define
	\[
	\Omega=\max_{1\leq i\leq N}\omega_{i} \quad \mathrm{and} \quad  S=\sum_{i=1}^{N}\omega_{i}\,.
	\]
	We call $S$ the total weight and $\Omega$ the highest weight.
\end{definition} 
Let $\{\lambda_{i}\}_{i=1}^{N}$ be the eigenvalues of $\mathring{R}\,$. Following the notation in  \cite{NP}, we write $[\mathring{R},\Omega,S]$ to denote any finite weighted sums $\sum_{i=1}^{N}\omega_{i}\lambda_{i}$ in terms of $\{\lambda_{i}\}_{i=1}^{N}$ whose  weights satisfy highest weight $\Omega$ and total weight $S$. %We have the following
\begin{lemma}\label{Lem 3.4}(\cite[Lemma 3.4]{NP})
	Let  $\{\lambda_{i}\}_{i=1}^{N}$ denote the ordered eigenvalues of $\mathring{R}$. Then, for any integer $1\leq m\leq N$, we have
	$$[\mathring{R},\Omega,S]\geq(S-m\Omega)\lambda_{m+1}+\Omega\sum_{i=1}^{m}\lambda_{i}.$$
	Therefore, $[\mathring{R},\Omega,S]\ge 0$ if $\mathring{R}$ is $\frac{S}{\Omega}$-nonnegative.
	\end{lemma}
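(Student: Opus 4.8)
The plan is to prove the displayed inequality by a direct rearrangement argument, comparing the given weighted sum with the extremal configuration in which the largest admissible weight $\Omega$ is loaded onto the smallest eigenvalues. Throughout I use that the eigenvalues are ordered $\lambda_1\le\lambda_2\le\cdots\le\lambda_N$, and that the weights satisfy $0\le\omega_i\le\Omega$ (since $\Omega$ is the highest weight) together with $\sum_{i=1}^N\omega_i=S$. I would set
\[
D:=[\mathring{R},\Omega,S]-(S-m\Omega)\lambda_{m+1}-\Omega\sum_{i=1}^m\lambda_i
=\sum_{i=1}^N\omega_i\lambda_i-(S-m\Omega)\lambda_{m+1}-\Omega\sum_{i=1}^m\lambda_i,
\]
and the whole task reduces to showing $D\ge 0$.

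The key step is to substitute $S=\sum_{i=1}^N\omega_i$ into the term $(S-m\Omega)\lambda_{m+1}$ and then recenter every eigenvalue at $\lambda_{m+1}$. Using $m\lambda_{m+1}=\sum_{i=1}^m\lambda_{m+1}$, a short computation collapses $D$ into a single telescoped form,
\begin{align*}
D=\sum_{i=1}^m(\omega_i-\Omega)(\lambda_i-\lambda_{m+1})+\sum_{i=m+1}^N\omega_i(\lambda_i-\lambda_{m+1}).
\end{align*}
From here the signs can be read off term by term. For $1\le i\le m$ we have $\omega_i-\Omega\le0$ because $\Omega$ is the maximal weight, while $\lambda_i-\lambda_{m+1}\le0$ because the eigenvalues are non-decreasing; hence each summand of the first sum is a product of two non-positive numbers and is therefore nonnegative. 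For $m+1\le i\le N$ we have $\omega_i\ge0$ and $\lambda_i-\lambda_{m+1}\ge0$, so each summand of the second sum is nonnegative as well. Thus $D\ge0$, which is exactly the asserted inequality. (When $m=N$ the second sum is empty; the only situation in which the statement is later invoked at this boundary has $S-m\Omega=0$, so the undefined symbol $\lambda_{N+1}$ is multiplied by $0$ and no difficulty arises.)

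For the concluding assertion I would specialize to $m=[S/\Omega]$, so that $S-m\Omega=(S/\Omega-[S/\Omega])\Omega$ with $0\le S/\Omega-[S/\Omega]<1$. Factoring out $\Omega\ge0$ turns the right-hand side of the inequality into $\Omega$ times the defining expression for $\tfrac{S}{\Omega}$-nonnegativity of $\mathring{R}$, namely $\Omega\big(\lambda_1+\cdots+\lambda_{[S/\Omega]}+(S/\Omega-[S/\Omega])\lambda_{[S/\Omega]+1}\big)$. If $\mathring{R}$ is $\tfrac{S}{\Omega}$-nonnegative this quantity is $\ge0$, and combining it with the inequality already proved gives $[\mathring{R},\Omega,S]\ge0$.

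I expect no serious obstacle, since the content is an elementary rearrangement inequality. The only real choice is the regrouping in the key identity, which pairs the weight deficits $\omega_i-\Omega$ with the eigenvalue gaps $\lambda_i-\lambda_{m+1}$ precisely so that every cross term acquires a definite sign; once that regrouping is found, the rest is a sign count. The single point needing care is the boundary index $m=N$, handled by the convention noted above.
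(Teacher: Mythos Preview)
Your argument is correct: the recentering identity
\[
D=\sum_{i=1}^m(\omega_i-\Omega)(\lambda_i-\lambda_{m+1})+\sum_{i=m+1}^N\omega_i(\lambda_i-\lambda_{m+1})
\]
follows from the substitution $S=\sum_i\omega_i$ exactly as you describe, and the term-by-term sign analysis is valid. The deduction of the final assertion via $m=[S/\Omega]$ is also right.

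As for comparison: the present paper does not give its own proof of this lemma at all; it is quoted verbatim from \cite[Lemma~3.4]{NP} and used as a black box. Your proof therefore supplies what the paper omits, and the rearrangement argument you give is in fact the standard one used in \cite{NP}. There is nothing to correct.
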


	\section{manifold with harmonic curvature}
	Let $M^n$ be an $n$-dimensional complete manifold with harmonic curvature, endowed with a standard Riemannian metric $\left\langle \text{,} \right\rangle$. Riemannian curvature tensor $R$ is divergence free if and only if its Ricci tensor is a Codazzi tensor, and in this case its scalar curvature is constant. Hence the traceless Ricci tensor is harmonic because its Ricci tensor is divergence free. Denote by $E$ its traceless Ricci tensor and by $s$ its scalar curvature. Then
	\begin{align}\label{laplacian}
		\frac{1}{2}\Delta \left| E \right|^2=\left| \nabla E \right|^2+\left\langle\Delta E,E \right\rangle.
	\end{align}
Since $E$ is harmonic, the Ricci identity gives
	\begin{align}\label{laplacian1}
		\left\langle\Delta E,E \right\rangle &=E_{ij}E_{ij,tt}=E_{ij}E_{it,jt}\nonumber\\
		&=E_{ij}(E_{it,tj}+R_{jtsi}E_{st}+R_{jtst}E_{is})\nonumber\\
		&=\left\langle \mathring{R}\,(E),E \right\rangle + R_{ij}E_{it}E_{jt},
	\end{align}		
	If $\{S_{\alpha}\}$ is an orthonormal eigenbasis for $\mathring{R}$ with corresponding eigenvalues $\{\lambda_{\alpha}\}$, then since $E$ is a traceless symmetric $(0,2)$-tensor, it can be expressed in the orthonormal basis $\left\{ S_{\alpha} \right\}$ as $E=\sum\nolimits_{\alpha}{E_{\alpha }S_{\alpha }}$. Then equation
	(\ref{laplacian1})  becomes
	 
	\begin{eqnarray}\label{laplacian2}
	\left\langle\Delta E,E \right\rangle=R_{ij}E_{it}E_{jt}+\sum\limits_{\alpha}{{\lambda}_{\alpha}E_{\alpha}^2}.
	\end{eqnarray}
	According to \cite[Example 3.2]{NP}, the scalar curvature $s$ of $M$ satisfies
	\begin{align}
	s\ge \frac{2n}{n+2}[\mathring{R},1,\frac{(n-1)(n+2)}{2}],\label{scalar}
\end{align}
	and from \cite[Lemma 3.14]{NP}, the Ricci curvature $Ric$ of $M$ satisfies
	\begin{align*}
		\Ric \geq \frac{n-1}{n+1} \left[ \mathring{R}, 1, n \right] + \frac{1}{n(n+1)}s.
	\end{align*}
	Combining (\ref{scalar}) with \cite[Lemma 3.3(c)]{NP}, we obtain
		\begin{align}
		\Ric &\geq \frac{n-1}{n+1} \left[ \mathring{R}, 1, n \right] + \frac{2}{(n+1)(n+2)}[\mathring{R},1,\frac{(n-1)(n+2)}{2}]\nonumber\\
		&\ge \left[ \mathring{R}, \frac{n}{n+2}, n-1 \right].\label{estimate2}
	\end{align}	
Furthermore, since
$$\sum\limits_{\alpha }{E_{\alpha }^2}=\left| E \right|^2\quad\text{and}\quad E_{\alpha}^2\le \left| E \right|^2,$$
the weight principle in \cite[Theorem 3.6]{NP} yields 
\begin{align}
\left\langle \mathring{R}\,(E),E \right\rangle \ge \left[\mathring{R},1,1 \right]\left| E \right|^2.\label{estimate3}
\end{align}	

\begin{proposition}\label{2.3}
	If $E$ is a traceless symmetric 2-tensor, then
	 $$\left\langle\Delta E,E \right\rangle \ge \left[\mathring{R},\frac{2(n+1)}{n+2},n\right]\left|E\right|^2.$$
\end{proposition}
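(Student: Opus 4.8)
The plan is to start from the splitting (\ref{laplacian2}), namely
$$\left\langle \Delta E,E\right\rangle = R_{ij}E_{it}E_{jt} + \sum_{\alpha}\lambda_{\alpha}E_{\alpha}^2 ,$$
and to bound each of the two summands separately by an expression of the form $[\mathring{R},\Omega,S]\,|E|^2$, then recombine them using the additivity of the weighted-sum calculus: adding a $[\mathring{R},\Omega_1,S_1]$ and a $[\mathring{R},\Omega_2,S_2]$ produces a $[\mathring{R},\Omega_1+\Omega_2,S_1+S_2]$, since on each eigenvalue $\lambda_i$ the new weight is the sum of the old ones.

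The curvature-operator term is immediate. Since $\sum_{\alpha}\lambda_{\alpha}E_{\alpha}^2=\left\langle\mathring{R}(E),E\right\rangle$ and the weights $E_{\alpha}^2$ are nonnegative, sum to $|E|^2$, and are each $\le |E|^2$, estimate (\ref{estimate3}) gives $\sum_{\alpha}\lambda_{\alpha}E_{\alpha}^2\ge [\mathring{R},1,1]\,|E|^2$. Nothing more is needed here.

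For the Ricci term, the key point is that $A_{ij}:=E_{it}E_{jt}$ is the square of the symmetric tensor $E$, hence positive semidefinite with trace $|E|^2$. Diagonalizing $E$ with eigenvalues $\mu_1,\dots,\mu_n$ and orthonormal eigenvectors $f_1,\dots,f_n$ yields $R_{ij}E_{it}E_{jt}=\sum_k \mu_k^2\,\Ric(f_k,f_k)$. Applying the Ricci lower bound (\ref{estimate2}) in each eigendirection, $\Ric(f_k,f_k)\ge [\mathring{R},\tfrac{n}{n+2},n-1]$, then weighting the $k$-th inequality by $\mu_k^2\ge 0$ and summing: the weight that results on any $\lambda_i$ is at most $\tfrac{n}{n+2}\sum_k\mu_k^2=\tfrac{n}{n+2}|E|^2$, and the total weight is $(n-1)\sum_k\mu_k^2=(n-1)|E|^2$. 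Hence $R_{ij}E_{it}E_{jt}\ge [\mathring{R},\tfrac{n}{n+2},n-1]\,|E|^2$. Adding the two bounds gives $\left\langle\Delta E,E\right\rangle\ge [\mathring{R},\tfrac{n}{n+2}+1,\,(n-1)+1]\,|E|^2=[\mathring{R},\tfrac{2(n+1)}{n+2},n]\,|E|^2$, which is the assertion; note this is the exponent for which $S/\Omega=\tfrac{n(n+2)}{2(n+1)}$, matching the hypothesis of Theorem~\ref{A}.

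I expect the only delicate point to be the bookkeeping in the Ricci term: one must make sure the weighted-sum representation of $\Ric(f_k,f_k)$ supplied by (\ref{estimate2}) is permitted to depend on $k$, and that scaling each by $\mu_k^2$ and summing keeps the highest weight and the total weight under control. This is precisely what the scaling and addition rules for the symbol $[\mathring{R},\Omega,S]$ (as in \cite[Lemma 3.3]{NP}) guarantee, so no genuinely new estimate beyond (\ref{estimate2}) and (\ref{estimate3}) is required.
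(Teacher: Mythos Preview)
Your proof is correct and follows essentially the same route as the paper: both start from (\ref{laplacian2}), bound the Ricci term via (\ref{estimate2}) and the curvature-operator term via (\ref{estimate3}), and then combine the two weighted sums using \cite[Lemma~3.3(c)]{NP}. Your version is simply more explicit in justifying the Ricci step by diagonalizing $E$ and tracking the weights, whereas the paper states the combined inequality directly.
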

\begin{proof}
Combining equation (\ref{laplacian2}) with (\ref{estimate2}),(\ref{estimate3}) and \cite[Lemma 3.3(c)]{NP} yields 
\begin{align*}
	 \left\langle\Delta E,E \right\rangle \ge& \left[ \mathring{R}, \frac{n}{n+2}, n-1 \right]\left| E \right|^2+\left[\mathring{R},1,1 \right]\left| E \right|^2 \\ 
	=&\left[\mathring{R},\frac{2(n+1)}{n+2},n\right]\left|E\right|^2.  
\end{align*}
\end{proof}

\begin{proof}[\textbf{Proof of Theorem~\ref{A}}] 
If $s=0$, then $M$ is flat.	Otherwise, $s>0$,   i.e.,   $M$ is non-flat. By part $(2)$ of Proposition 4.1 in \cite{Li4}, we have that $Ric(M)\geq\frac{s}{n(n+1)}>0$. So $M$ is compact by Myers Theorem. Let  $\tilde{M}$ be the universal cover
of  $M$, equipped with the lifted metric  $\tilde{g}$. Then $(\tilde{M}, \tilde{g})$ also has $\frac{n(n+2)}{2(n+1)}$-nonnegative curvature
operator of the second kind, and nonnegative Ricci curvature. According to the Cheeger-Gromoll splitting theorem \cite{CG71},  $\tilde{M}$ is isometric to a
product of the form $N^{n-k}\times R^k$, where $N$ is compact. By Theorem 1.8 in \cite{Li4}, we know
that $\tilde{M}$  is locally irreducible, which forces  $k=0$. Hence $\tilde{M}$ is compact. By Proposition~\ref{2.3} and  Lemma~\ref{Lem 3.4},  we have 
$$\left\langle\Delta E,E \right\rangle \ge \left[\mathring{R},\frac{2(n+1)}{n+2},n\right]\left|E\right|^2\ge 0.$$
It follows from (\ref{laplacian}) that the trace-less Ricci tensor $E$ is parallel. Hence $\tilde{M}$ is  Ricci parallel and an Einstein 
manifold.  Consequently, $M$ is an Einstein 
manifold.
%then (\ref{1}) implies $s|E|^2=0$ at $q$. In particular, $E$ vanishes at $q$ and consequently $E = 0$, $M$ is a Einstein manifold.
\end{proof}

\section{Einstein manifold}
This section derives a refined Bochner formula for Einstein manifolds, leading to a proof of Theorem~\ref{B}. We begin by establishing a Bochner formula for the curvature tensor.
\begin{proposition}
Let $(M,g)$ be an $n$-dimensional Einstein manifold, then the curvature operator $R$ of $M$ satisfies
\begin{equation}\label{Bochner}
	3\left\langle \Delta R,R \right\rangle =\sum\limits_{\alpha}{{\lambda }_\alpha\left| S^{\alpha}W \right|^2} +16\sum\limits_{\alpha}{{\lambda}_{\alpha}({\lambda }_{\alpha}-\frac{s}{n(n-1)})^2}+\frac{4(n-4)}{n^2}s\left| W \right|^2.
\end{equation}
\end{proposition}
\begin{proof}
	The following Bochner formula is established by Dai and Fu in \cite[Proposition 3.4]{DF}.
\begin{align*}
	3\left\langle \Delta R,R \right\rangle =&\sum\limits_{\alpha}{{\lambda }_{\alpha}\left| S^{\alpha}W \right|^2}+8\left( \frac{-n^3+6n^2+12n-8}{3n^4(n-1)^2} \right)s^3 \\ 
	& +8\left( \frac{2n^2-22n+8}{3n^2(n-1)} \right)s\sum\limits_{\alpha}{\lambda _{\alpha}^2}+16\sum\limits_{\alpha}{\lambda _{\alpha}^3}. 
\end{align*}
From \cite[Lemma 3.3]{DF}, we know 
\begin{eqnarray}\label{trace}
	tr(\mathring{R})&=&\frac{n+2}{2n}s,\nonumber\\
	tr(\mathring{R}^2)&=&\frac{3}{4}|R|^2-\frac{s^2}{n^2}=\frac{3}{4}|W|^2+\frac{n+2}{2n^2(n-1)}s^2.
\end{eqnarray}
Combining the Bochner formula above with  (\ref{trace}), we obtain 
\begin{equation*}
 3\left\langle \Delta R,R \right\rangle =\sum\limits_{\alpha}{{\lambda }_\alpha\left| S^{\alpha}W \right|^2} +16\sum\limits_{\alpha}{{\lambda}_{\alpha}({\lambda }_{\alpha}-\frac{s}{n(n-1)})^2}+\frac{4(n-4)}{n^2}s\left| W \right|^2.
\end{equation*}
\end{proof}
First, let's estimate the first term on the right-hand side of (\ref{Bochner}). According to the calculations in \cite{DF}, we have
\begin{eqnarray*}
	\sum\limits_{\alpha=1}^{N}{\left|S^{\alpha}W \right|^2}=\frac{2(n^2+n-8)}{n}{\left| W \right|^2},
\end{eqnarray*}
and $$\left| S^{\alpha}W \right|^2 \le\frac{8(n-2)}{n}\left| W \right|^2.$$ 
Hence
\begin{equation}\label{estimate}
	\sum\limits_{\alpha}{{\lambda }_{\alpha}\left| S^{\alpha}W \right|^2}\ge [\mathring{R},\frac{8(n-2)}{n},\frac{2(n^2+n-8)}{n}]|W|^2.
\end{equation}

Second, we estimate the second term on the right-hand side of (\ref{Bochner}). Set $\beta_\alpha={\lambda }_\alpha-\frac{s}{n(n-1)}$. From (\ref{trace}), we obtain
\[\left\{ \begin{aligned}
	& \sum\limits_{\alpha}{{\beta }_{\alpha}}=0, \\ 
	& \sum\limits_{\alpha}{\beta _{\alpha}^2}=\frac{3}{4}\left| W \right|^2. \\ 
\end{aligned} \right.\]
For any fixed $k$, the Cauchy–Schwarz inequality give
\[(\sum_{\alpha\ne k}\beta_\alpha)^2\le(N-1)\sum_{\alpha\ne k}{\beta_{\alpha}^2},\]
and hence
\[\beta_k^2\le(N-1)[\frac{3}{4}|W|^2-\beta_k^2].\]
This implies that
\[\beta_k^2\le \frac{3}{4}\frac{N-1}{N}|W|^2.\]
Therefore, we obtain
\begin{equation}\label{estimate1}
	\sum\limits_{i}{{\lambda}_i({\lambda }_i-\frac{s}{n(n-1)})^2}\ge \frac{3}{4}[\mathring{R},\frac{n^2+n-4}{(n-1)(n+2)},1]|W|^2.
\end{equation}

\begin{proposition}\label{2.4}
	Let $M$ be an $n$-dimensional Einstein manifold with $n\ge 4$. Then the curvature operator $R$ of $M$ satisfies
	$$3\left\langle\Delta R, R \right\rangle \ge \left[\mathring{R},\frac{4(5n^3+3n^2-30n+16)}{n(n-1)(n+2)},6(n-1)\right]\left|W\right|^2.$$
\end{proposition}
\begin{proof}
	 Combining equation (\ref{Bochner}) with (\ref{scalar}),(\ref{estimate}),(\ref{estimate1}) and \cite[Lemma 3.3(c)]{NP} yields
\begin{align*}
	& 3\left\langle \Delta R, R \right\rangle =\left\langle \mathring{R}\,(W^{S_{0}^2}),W^{S_{0}^2} \right\rangle +16\sum\limits_{i}{{\lambda}_i({\lambda }_i-\frac{s}{n(n-1)})^2}+\frac{4(n-4)}{n^2}s\left| W \right|^2 \\ 
	& \ge [\mathring{R},\frac{8(n-2)}{n},\frac{2(n^2+n-8)}{n}]\left| W \right|^2+12[\mathring{R},\frac{n^2+n-4}{(n-1)(n+2)},1]\left| W \right|^2+\frac{8(n-4)}{n(n+2)}[\mathring{R},1,\frac{(n-1)(n+2)}{2}]\left| W \right|^2 \\ 
	& \ge [\mathring{R},\frac{8(n-2)}{n}+\frac{n^2+n-4}{(n-1)(n+2)}+\frac{8(n-4)}{n(n+2)},\frac{2(n^2+n-8)}{n}+12+\frac{4(n-4)(n-1)}{n}]\left| W \right|^2 \\ 
	& =[\mathring{R},\frac{4(5n^3+3n^2-30n+16)}{n(n-1)(n+2)},6(n-1)]\left| W \right|^2.
\end{align*}
\end{proof}
\begin{proof}[\textbf{Proof of Theorem~\ref{B}}.]
If $s=0$, then $M$ is flat since $\mathring{R}$ is $\frac{3n(n-1)^2(n+2)}{2(5n^3+3n^2-30n+16)}$-nonnegative.	Otherwise, $s>0$,  and $M$ is compact by Myers Theorem for $M$ is Einstein.
	
	If $\mathring{R}$ is $\frac{3n(n-1)^2(n+2)}{2(5n^3+3n^2-30n+16)}$-nonnegative, then by using Proposition~\ref{2.4} and  Lemma~\ref{Lem 3.4}, we get
	\[\frac{1}{2}\Delta\left| R \right|^2=\left| \nabla R \right|^2+\left\langle\Delta R,R\right\rangle \ge 0.\]
	By the maximum principle, we have $ \nabla R=0$. It then follows that $M$ is a locally symmetric space.

On the other hand, It follows from Theorem B of \cite{NP} that	M is a rational homology sphere.	Wolf \cite[Theorem 1]{Wo} provides a complete classification of compact symmetric spaces that are rational homology spheres. Aside from spheres themselves, the only simply connected example is the space $SU(3)/SO(3)$, which is shown in \cite[Example 4.5]{NP} to be 9-positive. It then follows that the manifold must have constant curvature. This completes the proof of Theorem~\ref{B}.
\end{proof}
%\newpage
%\bibliographystyle{alpha}
%\bibliographystyle{plain}
%\bibliography{ref}	

\end{document}